\newtheorem{definition}{Definition}
\newtheorem{proposition}{Proposition}
\title{A Construction for Variable Dimension\\
  Strong Non-Overlapping Matrices}
\author{Elena Barcucci
\institute{University of Florence\\ Italy}
\email{elena.barcucci@unifi.it}
\and
Antonio Bernini
\institute{University of Florence\\ Italy}
\email{antonio.bernini@unifi.it}
\and
Stefano Bilotta
\institute{University of Florence\\ Italy}
\email{stefano.bilotta@unifi.it}
\and
Renzo Pinzani
\institute{University of Florence\\ Italy}
\email{renzo.pinzani@unifi.it}
}
\begin{document}
\maketitle

\begin{abstract}
We propose a method for the construction of sets of variable dimension 
strong non-overlapping matrices basing on any strong non-overlapping set of strings.
\end{abstract}

\section{Introduction}

Intuitively, two matrices do not overlap if it is not possible 
to move one over the other in a way such that the 
corresponding entries match. In some recent works (\cite{BBBP1},\cite{BBBP2},\cite{BBBP3}) the 
matrices are constructed by imposing some constraints on their rows which must avoid 
some particular consecutive patterns or must have some fixed entries in particular 
positions. The matrices of the sets there defined have the same fixed dimension.

In the present paper, we deal with matrices having different dimensions and we construct them by means a different approach: we move from any strong non-overlapping set $W$ of strings, defined over a finite alphabet, and, in a very few words, the strings of $W$ becomes the rows of our matrices. The method is general and once the cardinality of the strings of $W$ with a same length is known, the cardinality of the set of matrices is straightforward.

\medskip
This work could fit in the theory of bidimensional codes, as well as non overlapping 
sets of strings do in the theory of codes. Moreover, if the latter have been used in 
telecommunication systems both theory and engineering \cite{BS,WW}, the matrices of our sets could be 
useful in the field of digital image processing, and a possible (future) application of 
this kind of sets is in the template matching which is a technique to discover if small 
parts of an image match a template image.

\section{Preliminaries}
Let $\mathcal M_{m\times n}$ be the set of all the matrices with $m$ rows and $n$ columns. Given a matrix $A\in \mathcal M_{m\times n}$, we consider a block partition
\begin{equation}\label{pb}
A =(A_{i,j})= \begin{bmatrix}
A_{11} & \ldots &A_{1k}\\
\vdots & \ldots &\vdots\\ 
A_{h1} & \ldots & A_{hk}
\end{bmatrix}\ .
\end{equation}

Let us define $fr(A_{ij})$ the \emph{frame} of a block $A_{ij}$ of $A$. Intuitively, it is a set tracking the borders of the block which lie on the top ($t$), left ($l$), right ($r$) and bottom ($b$) border of the matrix $A$. More precisely, the set $fr(A_{i,j})$ is a subset of $\{t,b,l,r\}$ defined as follows:
\begin{definition}\label{frame}
$$
fr(A_{i,j})\supseteq
\begin{cases}
t, &\text{if } i=1\\
b, &\text{if } i=h\\
l, &\text{if } j=1\\
r, &\text{if } j=k\\
\end{cases}
\ \ \ \ \ .
$$
\end{definition}

\noindent
For example, if
$
A=
\begin{bmatrix}
A_{11}&A_{12}&A_{13}
\end{bmatrix}
$
($h=1$ and $k=3$)
then $fr(A_{11})=\{t,b,l\}$,  $fr(A_{12})=\{t,b\}$, and $fr(A_{13})=\{t,b,r\}$ since $i=h=1$. But if 

$$
A=
\begin{bmatrix}
A_{11}&A_{12}&A_{13}\\
A_{21}&A_{22}&A_{23}\\
A_{31}&A_{32}&A_{33}
\end{bmatrix}
$$
then $fr(A_{11})=\{t,l\}$,  $fr(A_{12})=\{t\}$,  
$fr(A_{13})=\{t,r\}$ and similarly for the other blocks. Note that 
in this case $fr(A_{22})=\emptyset$.

\bigskip
\begin{definition}\label{sovrapponibili}
Given two matrices $A\in \mathcal M_{m\times n}$ and $B\in \mathcal M_{m'\times n'}$, they are said
\emph{overlapping} if there exist two suitable block partitions
$A=(A_{ij})$
, 
$B=
(B_{i'j'})
$, and some $i,j,i',j'$
such that
\begin{itemize}
\item $A_{i,j}=B_{i'j'}$, and
\item $fr(A_{ij})\cup fr(B_{i'j'})=\{t,l,r,b\}$. 
\end{itemize}

\noindent
In the case $A=B$, the matrix is said \emph{self-overlapping}.
\end{definition}

\bigskip\noindent
To illustrate the definition, the following examples are given:
\begin{itemize}
\item
Given the two matrices
$$
A=\mleft[
\begin{array}{cc|cc|c}
  1&2&1&1&2 \\
  0&1&0&3&0 \\
  \hline
  3&2&1&0&2 \\
  0&1&3&1&3 \\
\end{array}
\mright]
\quad  \mbox{and} \quad 
B=\mleft[
\begin{array}{cc}
  2&1\\
  \hline
  1&1\\
  0&3\\
\end{array}
\mright]\ ,
$$
they overlap since the entries of the blocks $A_{12}$ and $B_{21}$ coincide. Moreover, we have $fr(A_{12})=\{t\}$, $fr(B_{21})=\{l,b,r\}$ so that $fr(A_{12})\cup fr(B_{21})=\{l,t,b,r\}$.

\item If
$
B=\mleft[
\begin{array}{c|cc}
  3&1&2\\
  2&0&1\\
\end{array}
\mright]
$
the matrix $A$ (as before) and the matrix $B$ again overlap since $A_{11}=B_{12}$ and $fr(A_{11})\cup fr(B_{12})=\{l,r,b,t\}$ being $fr(A_{11})=\{l,t\}$ and $fr(B_{12})=\{t,r,b\}$.
 
\item Note that if
$
B=\mleft[
\begin{array}{cc|c}
  1&2&3\\
  0&1&2\\
\end{array}
\mright]
$, even if 
$A_{11}=B_{11}$, we have $fr(A_{11})=\{l,t\}$ and 
$fr(B_{11})=\{l,t,b\}$ so that $fr(A_{11})\cup 
fr(B_{11})=\{l,b,t\}\neq \{l,t,b,r\}$. Nevertheless, the two matrices are overlapping 
since, considering the block partitions
$
B=
\mleft[
\begin{array}{cc}
  B_{11}&B_{12}\\
  B_{21}&B_{22}\\
\end{array}
\mright]
=\mleft[
\begin{array}{c|cc}
  1&2&3\\
  \hline
  0&1&2\\
\end{array}
\mright]
$
and
$$
A=
\begin{bmatrix}
A_{11}&A_{12}\\
A_{21}&A_{22}\\
\end{bmatrix}
=
\mleft[
\begin{array}{cc|ccc}
  1&2&1&1&2 \\
  \hline
  0&1&0&3&0 \\
  3&2&1&0&2 \\
  0&1&3&1&3 \\
\end{array}
\mright]\ ,
$$
we have $A_{11}=B_{22}$ and
$fr(A_{11})\cup 
fr(B_{22})=\{l,t,b,r\}$.

\item As a further example we consider the particular case where $A=[A_{11}]$ and 
$
B=
\begin{bmatrix}
B_{11}&B_{12}&B_{13}\\
B_{21}&B_{22}&B_{23}\\
B_{31}&B_{32}&B_{33}
\end{bmatrix}
$
 with $B_{22}=A_{11}$. Here, we have $fr({A_{11}})\cup fr(B_{22})=
 \{t,b,l,r\}\cup\{\emptyset\}=\{t,b,l,r\}$ and the two matrices are overlapping.
\item We conclude this list of examples showing two matrices $A$ and $B$ such that, even if they have two equal blocks ($A_{11}=B_{11}$), they are not overlapping since the second condition on the frames of the blocks of Definition \ref{sovrapponibili} is not fulfilled $(\mbox{since}\ fr({A_{11}})\cup fr(B_{11})=
 \{t,l\}\neq\{t,b,l,r\})$:
$$
A=\mleft[
\begin{array}{cc|cc|c}
  1&2&1&1&2 \\
  0&1&0&3&0 \\
  \hline
  3&2&1&0&2 \\
  0&1&3&1&3 \\
\end{array}
\mright]
\quad  \mbox{,} \quad 
B=\mleft[
\begin{array}{cc|c}
  1&2&3\\
  0&1&1\\
  \hline
  1&0&3\\
\end{array}
\mright]\ .
$$
\end{itemize}

From these examples, it should be clear that if two matrices are overlapping, then the common block naturally induces a block partition $(A_{i,j})$ for $A$ (and a block partition $(B_{i,j})$) such that the number of blocks in each its row and column can be not larger than $3$. Figure \ref{fig1} shows two examples of the least fine block partitions for two overlapping matrices $A$ and $B$ induced by the (gray) common block. Therefore, the block partitions \ref{pb} involved in Definition \ref{sovrapponibili} are such that $h,k\in\{1,2,3\}$.

\begin{center}
\begin{figure}[!h]
\centering
\includegraphics[trim={3cm 7cm 0cm 3cm},clip,scale=.5]{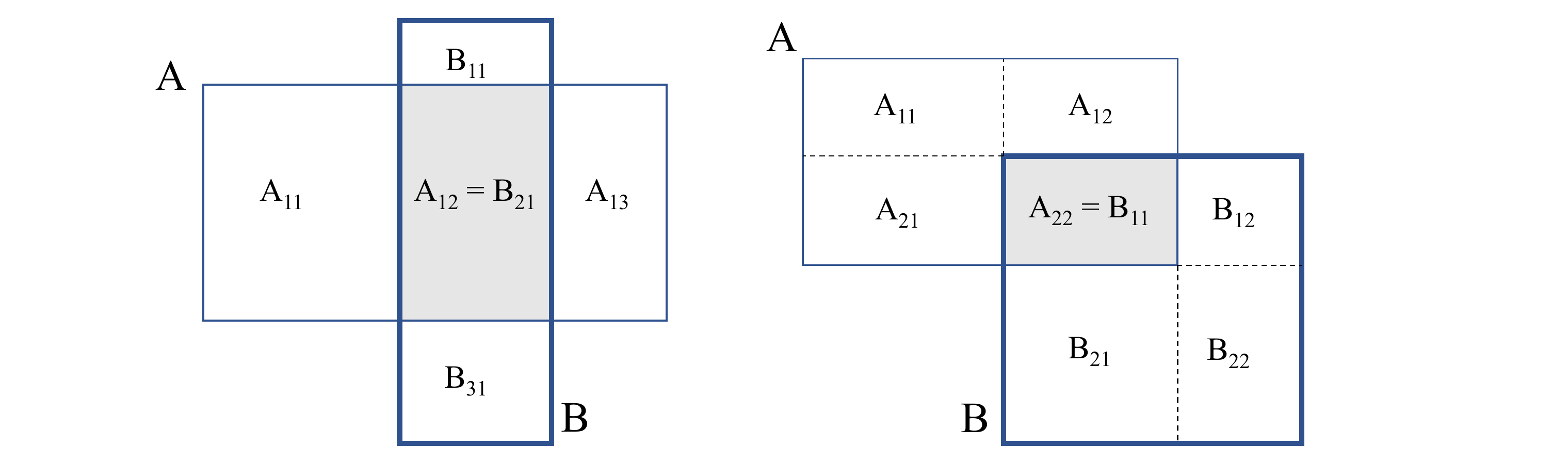}
\caption{The least fine block partition in two examples of two overlapping matrices}
\label{fig1}
\end{figure}    
\end{center}

\medskip
\noindent
We note that if a matrix is completely contained in the other, then the two matrices are overlapping according to Definition \ref{sovrapponibili}, as in the second to last example of the above list.
In the context of strings, the scenario is different, as illustrated in the following. Two strings are said \emph{overlapping} if there is a proper prefix of one that is equal to a proper suffix of the other. Consequently, they are said to be \emph{non-overlapping} if there is no a proper prefix of one that is equal to a proper prefix of the other (these definitions are more formally recalled, later in this section). It can happen that, given two non-overlapping strings, one of them is an inner factor of the other, as in the case of the two binary strings $1111000$ and $10$. If this is not allowed, then the strings are said \emph{strong non-overlapping} (i.e. two strings are strong non-overlapping if they are non-overlapping and if one of them is not an inner factor of the other), as in the case of the two binary strings $1111000$ and $10100$. In short,  being non-overlapping strings or strong non-overlapping strings are different concepts.

In our framework, if two matrices $A$ and $B$ are not overlapping then it can not happen that one of them (say $B$) is completely contained in the other. Indeed, if this were the case, then the smaller matrix $B$ could be trivially partitioned in one block $B=B_{11}$ so that $fr(B_{11})=\{t,b,l,r\})$. Moreover, it would be $B_{11}=A_{ij}$ for some block $A_{ij}$ of the matrix $A$, and the matrices $A$ and $B$ would be overlapping, whatever the block $A_{i,j}$. 

Therefore, when two matrices are not overlapping, we prefer to call them strong non-overlapping matrices (instead of simply non-overlapping matrices), in order to emphasize that certainly neither is contained in the other. Then, we give the following formal definitions characterizing two such matrices and a set of strong non-overlapping matrices:

\begin{definition}
The matrices $A$ and $B$ are said \emph{strong non-overlapping} if there does
not exist any block partition for $A$ and $B$, and any $i,j,i',j'$ 
such that $A_{i,j}=B_{i',j'}$ or, if such block partitions exist, then 
$fr(A_{ij})\cup fr(B_{i'j'})\neq\{t,l,r,b\}$.
\end{definition}






\bigskip
\begin{definition}
A set $\mathcal P$ of matrices is said to be \emph{strong non-overlapping} if each matrix is self non-overlapping and if for any two matrices in $\mathcal P$ they are strong non-overlapping. 
\end{definition}

\bigskip
\noindent
For completeness, let us recall some notions about non-overlapping and strong non-overlapping sets of strings.

Given a finite alphabet $\Sigma$, a string $v\in \Sigma^*$ is said to be \emph{self non-overlapping} (often said \emph{unbordered} or 
equivalently \emph{bifix-free}) if any proper prefix of $v$ is different from 
any proper suffix of $v$ (for more details see \cite{BP}).  

Two self non-overlapping strings $v$, $v'\in \Sigma^*$ are said to be 
\emph{non-overlapping} (or equivalently \emph{cross bifix-free}) if any proper prefix of 
$v$ is different from any 
proper suffix of $v'$, and vice versa. A set of strings is said to be a 
\emph{non-overlapping set} (or \emph{cross bifix-free set}) of strings if each element of 
the set is slef non-overlapping and if any two strings are non-overlapping. 

\begin{definition}
Two non-overlapping strings $v$ and $v'$ are said to be \emph{strong non-overlapping} if 
there do not exist $\alpha,\beta \in \Sigma^*$, with $\alpha$ and $\beta$ not both empty, 
such that $v'=\alpha v\beta$ (or $v=\alpha v'\beta$).
\end{definition}

In other words, the strong non-overlapping property requires that the shortest string between $v$ and $v'$ (if any) does not occur as an inner factor
in the other one (\cite{BBP2,B}).
For example, if $v=1100$ and $v'=1\mathbf{1100}100$, then $v$ and 
$v'$ are non-overlapping but they are not strong non-overlapping since $v'$ contains an 
occurrence of $v$ (in bold).

\begin{definition}
A set of strings is said to be a \emph{strong non-overlapping set} if any two strings of 
the set are strong non-overlapping.
\end{definition}

\section{Construction of the set of matrices}
Let $\mathcal V_n=\displaystyle\bigcup_{s\leq n} V^s$ be a variable dimension 
strong non-overlapping set of strings where each $V^s$ is a non-overlapping set of 
strings of length $s$, for $s_0 \leq s \leq n$, where $s_0 \geq 2$ is the minimum string length.
We now define a set of variable dimension matrices, using strings of a same length $s$ of $V^{s}$ as rows of a matrix. In the following, the two matrices $C$ and $D$ of dimension $m_1\times s$ and $m_2\times t$, respectively, are constructed with the rows $C_i^s\in V^s$ and $D_j^t\in V^t$, with $i=1,2,\ldots,m_1$ and $j=1,2,\ldots, m_2$. 
$$
C=
\left(
\begin{matrix}
C_1^{s}\\
C_2^{s}\\
\vdots\\
\vdots\\
C_{m_1}^{s}
\end{matrix}
\right)
\hspace{1cm}
D=
\left(
\begin{matrix}
D_1^{t}\\
D_2^{t}\\
\vdots\\
D_{m_2}^{t}
\end{matrix}
\right)
$$  

It is not difficult to show that if $C$ and $D$ have a different 
number of columns (then $s\neq t$) they can not be overlapping (see next proposition).

Unfortunately, in the case $C$ and $D$ have the same number of 
columns ($s=t$), then the two matrices can present a ``vertical" 
overlap. More precisely:
\begin{itemize}
\item the matrix $D$ could be equal to a sub-matrix of $C$ constituted by $m_2$ consecutive rows of $C$ (or vice versa):

$$
C=
\begin{bmatrix}
C_{11}\\
C_{12}\\
C_{13}
\end{bmatrix}
=
\begin{bmatrix}
C_{11}\\
D\\
C_{13}
\end{bmatrix}
$$

\noindent
(with either blocks $C_{11}$ or $C_{13}$ possibly empty).

\item the first (last) $\ell$ rows of $D$ could be equal to the last (first) $\ell$ rows of $C$ (or vice versa):

$$
C=
\begin{bmatrix}
C_{11}\\
C_{12}
\end{bmatrix}
=
\mleft[
\begin{array}{c}
  C_{11}   \vspace{.1cm}\\
  \hline  \vspace{-.3cm}\\
  D_1^t \vspace{.07cm}\\
  D_2^t \vspace{.07cm}\\
  \vdots\\
  D_{\ell}^t
\end{array}
\mright]
\hspace{1cm}
D=
\begin{bmatrix}
D_{11}\\
D_{12}\\
\end{bmatrix}
=
\mleft[
\begin{array}{c}
  D_1^t\\
  D_2^t\\
  \vdots\\
  D_{\ell}^t 
  \vspace{.1cm}\\
  \hline  \vspace{-.3cm}\\
  D_{12}
\end{array}
\mright]\ .
$$
\end{itemize}
In order to avoid the situations described above, we introduce a constraint for the first and the last row of each matrix: all the matrices with the same number $s$ of columns must have the same first row $T^s\in V^s$ and the same last row $B^s\in V^s$, with $T^s\neq B^s$. Also, these two selected rows cannot appear as inner rows of any other matrix with that number $s$ of columns. In other words, we force:
\begin{itemize}
	\item the top row $T^s$ of all the matrices with the same number $s$ of columns to be  the same;
	\item the bottom row $B^s$ of all the matrices with the same number $s$ of columns to be  the same;
	\item $T^s\neq B^s$; 
	\item the rows $T^s$ and $B^s$ not to occur in any other line of the matrix.
\end{itemize} 

\noindent
Formally, the matrices $C$ with the same number $s$ of columns  must have the following structures:

$$
C=
\left(
\begin{matrix}
T^{s}\\
\\
C_2^{s}\\
\vdots\\
\vdots\\
C_{m_1-1}^{s}\\
\\
B^{s}
\end{matrix}
\right)
$$

\noindent
with $C_j^{s}\neq T^{s},B^{s}$,
for
$j= 2,3,\ldots, m_1-1$, and $C_j^{s},T^{s},B^{s}\in V^{s}$.

\bigskip
\noindent
We can now define the set $\mathcal V_{m\times n}^{(\leq)}$ of variable-dimension matrices as follows:

\begin{definition}\label{construction}
	Let $\mathcal V_n=\displaystyle\bigcup_{s\leq n} V^s$ be a variable dimension 
strong non-overlapping set of strings where each $V^s$ is a non-overlapping set of 
strings of length $s$, for $s_0 \leq s \leq n$, where $s_0 \geq 2$ is the minimum string length. Moreover,
	let
	
	$$
	\mathcal V_{m \times n}^{(\leq)}=
	\bigcup
	M
	$$
	be the union of the matrices $M$ where $M\in \mathcal M_{h\times s}$, with $2 \leq h\leq m$ 
	and $s_0 \leq s\leq n$, such that
	$$
	M=
	\left\{
	\begin{pmatrix}
	T^{s}\\
	A_2^{s}\\
	\vdots\\
	A_{h-1}^{s}\\
	B^{s}		
	\end{pmatrix}
	\right\}
	$$

\medskip\noindent
	with
$	A_j^{s},T^{s},B^{s}\in V^{s}
	\ \mbox{and}\ 
	A_j^{s}\neq T^{s},B^{s}\ \mbox{for}\ 
	j=2,3,\ldots, h-1\ .
$
\end{definition}

\medskip
The matrices $M\in \mathcal V_{m\times n}^{(\leq)}$ have at most $m$ rows and $n$ columns.
They are constructed by means of $h\leq m$ strings of length $s\leq n$ belonging to 
$\mathcal V_n$.
All the matrices $M$ with the same number $s$ of columns have the same bottom row $B_s$ and the same top row $T_s$, which are not the same. Moreover, each inner row is different from $T_s$ and $B_s$.

\medskip
 
We have the following proposition:

\begin{proposition}\label{prop}
The set $\mathcal V_{m\times n}^{(\leq)}$ is a strong non-overlapping set of variable-dimension matrices.
\end{proposition}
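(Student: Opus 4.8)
The plan is to translate the abstract frame condition of Definition~\ref{sovrapponibili} into concrete geometric constraints on the position of the common block, and then to read off contradictions from the hypotheses on the underlying string set $\mathcal V_n$ together with the constraints imposed on the rows $T^s$ and $B^s$ in Definition~\ref{construction}. Suppose $C\in\mathcal M_{m_1\times s}$ and $D\in\mathcal M_{m_2\times t}$ overlap. The common block is a rectangle occupying, say, columns $c_1,\dots,c_2$ and rows $p,\dots,q$ of $C$, and columns $d_1,\dots,d_2$ and rows $p',\dots,q'$ of $D$, with equal width $w=c_2-c_1+1=d_2-d_1+1$ and equal height $q-p+1=q'-p'+1$. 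In this notation the condition $fr(\cdot)\cup fr(\cdot)=\{t,b,l,r\}$ splits into four independent requirements: $l$ forces $c_1=1$ or $d_1=1$; $r$ forces $c_2=s$ or $d_2=t$; $t$ forces $p=1$ or $p'=1$; and $b$ forces $q=m_1$ or $q'=m_2$.

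First I would dispose of every configuration in which the block is not full width by inspecting a single one of its rows. Pick any row $x$ of the block: it is simultaneously a factor of a row $u\in V^s$ of $C$ (starting at column $c_1$) and of a row $v\in V^t$ of $D$ (starting at column $d_1$). Combining the $l$ and $r$ requirements yields four cases. If the block is flush left in $C$ and flush right in $D$ (so $c_1=1$, $d_2=t$) then $x$ is a prefix of $u$ and a suffix of $v$; the mirror case gives a suffix of $u$ equal to a prefix of $v$; when $w$ is strictly smaller than the length of the host string these are \emph{proper} prefix/suffix coincidences, contradicting that $u,v$ are non-overlapping (or, if $u=v$, that each row is unbordered). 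The two remaining cases make the block span the full width of one matrix, so $x$ is an entire string of $V^s$ (resp.\ $V^t$) occurring as a factor of a string of the other length; this is an inner-factor occurrence forbidden by the strong non-overlapping property of $\mathcal V_n$. In particular, when $s\neq t$ \emph{every} case is excluded (the width-$t$ subcase being moreover geometrically impossible inside a width-$s$ matrix), which already proves that matrices with different numbers of columns cannot overlap.

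It then remains to treat $s=t$. The single-row argument again rules out every block that is not full width, the forbidden coincidences now being proper prefix/suffix equalities among length-$s$ strings of $V^s$. Hence any overlap must use a full-width block, i.e.\ a run of \emph{entire} consecutive rows shared by $C$ and $D$; the $l,r$ requirements become automatic and only $t,b$ survive. Here I would view each matrix as a word over the alphabet of rows and invoke the construction: since $C$ and $D$ share the same designated rows, $T^s\neq B^s$, and no inner row equals $T^s$ or $B^s$, the row $T^s$ occurs only in the first position and $B^s$ only in the last position of \emph{both} matrices. The requirement $t$ then forces the shared top row to be $T^s$, hence $p=p'=1$; symmetrically $b$ forces $q=m_1$ and $q'=m_2$. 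Thus the shared block is the whole of $C$ and of $D$, so $C=D$: two distinct matrices of equal width cannot overlap. Running the same computation with $C=D$ shows that the only full-width self-overlap is the trivial one ($p=p'=1$, $q=q'=m_1$), while partial-width self-overlaps are excluded as before, so every matrix is self non-overlapping; together these give the claim.

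The genuinely new part, and the step I expect to be the main obstacle, is this full-width vertical case, since it is the only one not already handled by the string theory of $\mathcal V_n$: it is precisely here that the designated rows $T^s$ and $B^s$ are indispensable, and one must check carefully that the frame bookkeeping for $t$ and $b$ really does pin the shared run to rows $1,\dots,m_1$ and $1,\dots,m_2$ in both matrices. Everything else reduces, through a single representative row, to the assumptions that $\mathcal V_n$ is strong non-overlapping and that each $V^s$ consists of unbordered, pairwise non-overlapping strings.
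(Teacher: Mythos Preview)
Your argument is correct and follows the same basic idea as the paper: pick any row of the common block and read off a forbidden prefix/suffix or inner-factor coincidence between the corresponding rows of $C$ and $D$, contradicting that $\mathcal V_n$ is strong non-overlapping. The paper's written proof stops there.

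Your treatment is in fact more complete than the paper's. The paper's case list (their three bullets $C_i=ue_\ell v,\ D_j=e_\ell$; $C_i=ue_\ell,\ D_j=e_\ell v$; $C_i=e_\ell v,\ D_j=ue_\ell$) silently passes over the subcase $u=v=\varepsilon$, i.e.\ $s=t$ and the common block has full width in \emph{both} matrices. In that subcase one only gets $C_i=D_j$, which contradicts nothing about $\mathcal V_n$. This is precisely the ``vertical overlap'' scenario the paper describes \emph{before} Definition~\ref{construction} and for which the rows $T^s$, $B^s$ were introduced, yet the proof as printed never invokes those constraints. You correctly isolate this case, use that $T^s$ (resp.\ $B^s$) occurs only as the first (resp.\ last) row to force $p=p'=1$ and $q=m_1$, $q'=m_2$, and conclude $C=D$; the same bookkeeping shows any full-width self-overlap is trivial. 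So your proof patches a genuine omission in the paper's argument while otherwise following the same route.
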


\begin{proof} Let $C,D\in \mathcal V_{m\times n}^{(\leq)}$ and suppose that $C$ and $D$ are two overlapping matrices: then there exists a block matrix $E\in\mathcal M_{r\times c}$ such that $E=C_{i,j}=D_{i',j'}$ fore some two blocks $C_{i,j}$ and $D_{i'j'}$ in two suitable block partitions of $C$ and $D$, and with $fr(C_{i,j})\cup fr(D_{i'j'})=\{l,t,r,b\}$. We have

$$
E=
\mleft(
\begin{matrix}
e_{11} & \ldots &e_{1c}\\
\vdots & \ldots &\vdots\\ 
e_{r1} & \ldots & e_{rc}
\end{matrix}
\mright)\ \ .
$$

For each row $e_{\ell}$, with $\ell=1,2,\ldots,r$, there exist two rows $C_i, D_j\in \mathcal V_n$ such that one of the following cases occurs:
\begin{itemize}
\item $C_i=ue_{\ell}v$ and $D_j=e_{\ell}$, with either $u$ or $v$ possibly empty, where $u,v\in\Sigma^*$;
\item $C_i=ue_{\ell}$ and $D_j=e_{\ell}v$;
\item $C_i=e_{\ell}v$ and $D_j=ue_{\ell}$.
\end{itemize}

\noindent
In any case, the strings $C_i$ and $D_j$ are not strong non-overlapping strings (since they overlap over $e_{\ell}$) against the hypothesis $C_i,D_j\in \mathcal V_{n}$
. 
\end{proof}

\bigskip
We note that in the case $\mathcal V_n$ is a variable dimension non-overlapping set of strings (i.e. the non-overlapping property is not required to be strong), the resulting matrices are not strong non-overlapping according to Definition \ref{sovrapponibili}, since it is possible that one of the two matrices is completely contained in the other one as a suitable block. If we did not contemplate this possibility in Definition \ref{sovrapponibili}, then two matrices constructed with such a $\mathcal V_n$ could be considered still non-overlapping (according to a different definition of non-overlapping matrices). 

\medskip
Moreover, if $\mathcal V_n$ contains strings all of the same lengths, then Proposition \ref{prop} still holds: the matrices will have all the same number of columns.

\medskip
Finally, if $|V^s|$ denotes the cardinality of the non-overlapping set $V^s$, it is straightforward to deduce the following formula for the cardinality of $\mathcal V_{m\times n}^{(\leq)}$:
\begin{equation}\label{card}
|\mathcal V_{m\times n}^{(\leq)}|=\sum_{h\leq m}\ \sum_{s\leq n}(|V^s|-2)^{h-2}\ .
\end{equation}

The two terms $-2$ in the above formula take into account that the first and the last row in the matrices with $s$ columns are fixed and can not occur as inner rows.

\bigskip

For the sake of clearness, we propose an example for the construction of a set of variable dimension strong non-overlapping matrices.
Let $V^3= \{ 110, 210, 310, 320 \}$ and $V^5=\{22000,23000,33000\}$ be two  sets of non-overlapping strings over the alphabet $\Sigma=\{0,1,2,3\}$. It is easily seen that $V^3 \cup V^5$ is a strong non-overlapping code. Then, we construct 

$$\mathcal V_{4\times 5}^{(\leq)}= \mathcal M_{2\times 3}^{(\leq)} \cup \mathcal M_{3\times 3}^{(\leq)} \cup \mathcal M_{4\times 3}^{(\leq)} \cup \mathcal M_{2\times 5}^{(\leq)} \cup \mathcal M_{3\times 5}^{(\leq)} \cup \mathcal M_{4\times 5}^{(\leq)}$$ where:

$$
\mathcal M_{2\times 3}=
\left\{
\left(\begin{matrix}
1&1&0\\
3&2&0
\end{matrix}
\right)
\right\}
$$

$$
\mathcal M_{3\times 3}=
\left\{
\left(\begin{matrix}
1&1&0\\
2&1&0\\
3&2&0
\end{matrix}
\right)
,
\left(\begin{matrix}
1&1&0\\
3&1&0\\
3&2&0
\end{matrix}
\right)
\right\}
$$

$$
\mathcal M_{4\times 3}=
\left\{
\left(\begin{matrix}
1&1&0\\
2&1&0\\
2&1&0\\
3&2&0
\end{matrix}
\right)
,
\left(\begin{matrix}
1&1&0\\
2&1&0\\
3&1&0\\
3&2&0
\end{matrix}
\right)
,
\left(\begin{matrix}
1&1&0\\
3&1&0\\
2&1&0\\
3&2&0
\end{matrix}
\right)
,
\left(\begin{matrix}
1&1&0\\
3&1&0\\
3&1&0\\
3&2&0
\end{matrix}
\right)
\right\}
$$

$$
\mathcal M_{2\times 5}=
\left\{
\left(\begin{matrix}
2&2&0&0&0\\
3&3&0&0&0
\end{matrix}
\right)
\right\}
$$

$$
\mathcal M_{3\times 5}=
\left\{
\left(\begin{matrix}
2&2&0&0&0\\
2&3&0&0&0\\
3&3&0&0&0
\end{matrix}
\right)
\right\}
$$

$$
\mathcal M_{4\times 5}=
\left\{
\left(\begin{matrix}
2&2&0&0&0\\
2&3&0&0&0\\
2&3&0&0&0\\
3&3&0&0&0
\end{matrix}
\right)
\right\}
$$

The reader can easily check that $\mathcal V_{4\times 5}^{(\leq)}$ is a set of variable dimension strong non-overlapping matrices having cardinality 10 according to (\ref{card}).

\section{Conclusions}

The paper provides a simple and general method to generate a set of strong 
non-overlapping matrices over a finite alphabet, once a strong non-overlapping set of 
strings (over the same alphabet) is at our disposal. The crucial point is the constraint 
on the first and last rows which must be the same for all the matrices with the same 
number of columns.

Using the variable length strong non-overlapping sets of strings defined in \cite{B} and 
\cite{BBP2}, two different set of strong non-overlapping matrices arise which could be 
compared in terms of cardinality or its asymptotic behaviour.

\bigskip
Moreover, the construction we proposed, in the case of fixed dimension matrices, gives the possibility
to list them in a Gray code sense, following the studies started in \cite{BBBP1,BBP1,BBPSV2,BBPSV1,BBPV2,BBPV1} where different Gray codes are defined for several set of strings and matrices.

In this case, we generate the matrices moving from a set of non-overlapping strings $V^s$ of length $s$ and we suppose that there exists a Gray code $GV^s$ for $V^s$:
$$
GV^s=\{w_1,w_2.\ldots,w_t,w_{t+1},w_{t+2}\} \text{ with } t>0\ .
$$

\noindent
Note that we require $|V^s|\geq3$. We choose two strings from $GV^s$. Without loss of generality, we choose $w_{t+1}$ and $w_{t+2}$ and we define the set of matrices 
$M_{h+2,s}$ with $h+2$ rows and $s$ columns where the first and last rows are, respectively, the strings $w_{t+1}$ and $w_{t+2}$:
\vspace{-.3cm}
	$$
	M_{h+2,s}=
	\left\{
	\begin{pmatrix}
	w_{t+1}\\
	C_1^{s}\\
	\vdots\\
	C_h^{s}\\
	w_{t+2}		
	\end{pmatrix}
	\Bigg\vert\ C_i^s\in V^s\setminus\{w_{t+1},w_{t+2}\}
	\right\}
	\ .
	$$
\noindent 
Let $N_{h,s}$ be the set of matrices obtained by $M_{h+2,s}$ removing the first and last rows:
\vspace{.1cm}
$$
	N_{h,s}=
	\left\{
	\begin{pmatrix}
	C_1^{s}\\
	\vdots\\
	C_h^{s}\\
	\end{pmatrix}
	\Bigg\vert\ C_i^s\in V^s\setminus\{w_{t+1},w_{t+2}\}
	\right\}
	\ .
	$$

\bigskip
\noindent
Clearly, the cardinality of $N_{h,s}$ and $M_{h+2,s}$ is the same and denoting it by $q$ it is $q=t^h$.

We now recursively define a Gray code $GN_{h,s}$ for the set $N_{h,s}$. If $h=1$, then the list $GN_{1,s}=(w_1),(w_2),\ldots,(w_t)$ is a Gray code (since it is obtained by $GV^s$ where the strings are read as matrices of dimension $1\times s$). Suppose now that $GN_{h,s}=A_1,A_2,\ldots,A_q$ is a Gray code where $h\geq1$ and $A_i\in N_{h,s}$, for $i=1,2,\ldots,q$. The following list $GN_{h+1,s} $of matrices, defined as block matrices,

$$
GN_{h+1,s}=
\mleft[
\begin{array}{c}
  w_1   \vspace{.1cm}\\
  \hline  \vspace{-.3cm}\\
  A_1\\
\end{array}
\mright]
\cdots
\mleft[
\begin{array}{c}
  w_1   \vspace{.1cm}\\
  \hline  \vspace{-.3cm}\\
  A_q\\
\end{array}
\mright]
\mleft[
\begin{array}{c}
  w_2   \vspace{.1cm}\\
  \hline  \vspace{-.3cm}\\
  A_q\\
\end{array}
\mright]
\cdots
\mleft[
\begin{array}{c}
  w_2   \vspace{.1cm}\\
  \hline  \vspace{-.3cm}\\
  A_1\\
\end{array}
\mright]
\cdots\cdots
\mleft[
\begin{array}{c}
  w_t   \vspace{.1cm}\\
  \hline  \vspace{-.3cm}\\
  A_{\ell}\\
\end{array}
\mright]
\cdots
\mleft[
\begin{array}{c}
  w_t   \vspace{.1cm}\\
  \hline  \vspace{-.3cm}\\
  A_{q+1-\ell}\\
\end{array}
\mright]\ \ ,
$$

\bigskip
\noindent
where
$$
\ell=
\begin{cases}
q, &\text{if } t \text{ is even}\\
1, &\text{if } t \text{ is odd}
\end{cases}
\ \ ,
$$

\medskip
\noindent
is easily seen to be a Gray code since the lists $A_1,A_2,\ldots,A_q$ and 
$w_1,w_2,\ldots,w_t$ are Gray codes for hypothesis.

Finally, adding the strings $w_{t+1}$ and $w_{t+2}$, respectively, as first and last rows to all the $q$ matrices $A_1,A_2,\ldots,A_q$ of $GN_{h,s}$ we obtain a Gray code $GM_{h+2,s}$ for the set $M_{h+2,s}$:
$$
GM_{h+2,s}=
\mleft[
\begin{array}{c}
  w_{t+1}   \vspace{.1cm}\\
  \hline  \vspace{-.3cm}\\
  A_1\\
  \hline  \vspace{-.3cm}\\
  w_{t+2}
\end{array}
\mright]
\cdots\cdots\cdots
\mleft[
\begin{array}{c}
  w_{t+1}   \vspace{.1cm}\\
  \hline  \vspace{-.3cm}\\
  A_q\\
  \hline  \vspace{-.3cm}\\
  w_{t+2}
\end{array}
\mright]\ \ .
$$

\bibliographystyle{eptcs}
\bibliography{generic}
\end{document}